\documentclass[oneside,english]{amsart}
\usepackage[T1]{fontenc}
\usepackage[latin9]{inputenc}
\usepackage{geometry}
\geometry{verbose}
\usepackage{amsthm}
\usepackage{amssymb}
\usepackage{graphicx}
\usepackage{esint}

\makeatletter
\numberwithin{equation}{section}
\numberwithin{figure}{section}
\theoremstyle{plain}
\newtheorem{thm}{\protect\theoremname}
\theoremstyle{plain}
\newtheorem{lem}[thm]{\protect\lemmaname}
\theoremstyle{definition}
\newtheorem{defn}[thm]{\protect\definitionname}
\theoremstyle{remark}
\newtheorem{rem}[thm]{\protect\remarkname}



\usepackage{todonotes}
\global\long\def\Im{\operatorname{Im}}

\global\long\def\Arg{\operatorname{Arg}}

\global\long\def\Disc{\operatorname{Disc}}

 \author{Ashish Goswami}

 \address{Saratoga High School, Saratoga, California, USA.} 
 \email{ashish.keebab@gmail.com}

 \author{Khang Tran}
 \address{California State University, Fresno, California, USA}
 \email{khangt@mail.fresnostate.edu}  

\makeatother

\usepackage{babel}
\providecommand{\definitionname}{Definition}
\providecommand{\lemmaname}{Lemma}
\providecommand{\remarkname}{Remark}
\providecommand{\theoremname}{Theorem}

\begin{document}
\title{A class of polynomials from enumerating queen paths}
\begin{abstract}
We study a class polynomials obtained from an enumeration of the number
of queen paths. In particular, we find the generating function for
the diagonal sequence of this table and the zero distribution of a
sequence of related polynomials.
\end{abstract}

\subjclass[2000]{30C15; 26C10; 11C08}
\keywords{Zero distribution, Generating function}
\maketitle

\section{Introduction}

In this paper, we study a class of polynomials arising from a combinatorial
problem of counting the number of rook and queen paths. We let $a_{m,n}$
and $b_{m,n}$ be the numbers of paths a rook and queen (respectively)
can move from $(0,0)$ to $(m,n)$ on an infinite 2D chess board.
These numbers have been studied in \cite{eft} and they satisfy the
recurrences
\begin{equation}
a_{m,n}=2a_{m-1,n}+2a_{m,n-1}-3a_{m-1,n-1}\label{eq:rookrecurrence}
\end{equation}
and 
\begin{align}
b_{m,n} & =2b_{m-1,n}+2b_{m,n-1}-b_{m-1,n-1}-3b_{m-2,n-1}\nonumber \\
 & -3b_{m-1,n-2}+4b_{m-2,n-2}.\label{eq:queenrecurrence}
\end{align}

Motivated by these recurrences, we define a table of polynomials by
replacing one of the coefficients in the equations by a variable $z$.
In particular, we defined the table of rook polynomials $\left\{ P_{m,n}(z)\right\} _{m,n=0}^{\infty}$
by the recurrence 
\[
P_{m,n}(z)=2P_{m-1,n}(z)+2P_{m,n-1}(z)+zP_{m-1,n-1}(z)
\]
for $m,n\in\mathbb{N}$ and $(m,n)\ne(0,0)$. For simplicity, we use
the standard initial condition $P_{0,0}(z)=1$ and $P_{m,n}(z)=0$
if $m<0$ or $n<0$. In the definition above, we replace the coefficient
of $a_{m-1,n-1}$ in (\ref{eq:rookrecurrence}) by $z$ since we will
see below that the main diagonal polynomials, $P_{m,m}(z)$, have
a connection with the famous sequence of Legendre polynomials.To see
this connection, we note from the given recurrence relation and the
initial condition that the polynomials $\left\{ P_{m,n}(z)\right\} _{m,n=0}^{\infty}$
are generated by 
\[
\sum_{m=0}^{\infty}\sum_{n=0}^{\infty}P_{m,n}(z)s^{m}t^{n}=\frac{1}{1-2s-2t-zst}.
\]
With the substitutions $s\rightarrow s/(-2)$, $t\rightarrow t/(-2)$,
and $z\rightarrow-4z$, we have 
\[
\sum_{m=0}^{\infty}\sum_{n=0}^{\infty}\frac{P_{m,n}(-4z)}{(-2)^{m+n}}s^{m}t^{n}=\frac{1}{1+s+t+zst}.
\]
From \cite[Lemma 4]{lt}, we conclude that when $m=n$
\[
\frac{P_{m,m}(-4z)}{2^{2m}}=z^{m}L_{m}\left(\frac{2}{z}-1\right)
\]
or equivalently 
\begin{equation}
P_{m,m}(z)=(-z)^{m}L_{m}\left(-\frac{8}{z}-1\right)=z^{m}L_{m}\left(\frac{8}{z}+1\right)\label{eq:rookLegendre}
\end{equation}
where $L_{m}(z)$ is the sequence of Legendre polynomials generated
by 
\[
\sum_{m=0}^{\infty}L_{m}(z)t^{m}=\frac{1}{(1-2zt+t^{2})^{1/2}}.
\]
The sequence of Legendre polynomials is a special case of the sequence
of Gegenbauer polynomials whose generating function is (\cite[IV.2]{sw})
\[
\frac{1}{(1-2zt+t^{2})^{\alpha}}.
\]
In the case $\alpha=1$, the function above generates the sequence
of Chebyshev polynomials of the second kind. For $\alpha>-1/2$, the
sequence of Gegenbauer polynomials are orthogonal on $[-1,1]$ (\cite[page 302]{aar})
with respect the weight function $(1-z^{2})^{\alpha-1/2}$. As a consequence,
the zeros of Gegenbauer polynomials lie on this interval for $\alpha>-1/2$.
We deduce from (\ref{eq:rookLegendre}) that the zeros of $P_{m,m}(z)$
lie on the interval $(-\infty,-4]$.

In a similar way, motivated by (\ref{eq:queenrecurrence}), we define
the table of Queen polynomials by the recurrence 
\begin{align}
Q_{m,n}(z) & =2Q_{m-1,n}(z)+2Q_{m,n-1}(z)-Q_{m-1,n-1}(z)-3Q_{m-2,n-1}(z)\nonumber \\
 & -3Q_{m-1,n-2}(z)+zQ_{m-2,n-2}(z).\label{eq:queenpolycurrence}
\end{align}
and the standard initial condition $Q_{0,0}(z)=1$ and $Q_{m,n}(z)=0$
if $m<0$ or $n<0$. Equivalently, this table is generated by 
\begin{equation}
\sum_{m=0}^{\infty}\sum_{n=0}^{\infty}Q_{m,n}(z)s^{n}t^{m}=\frac{1}{1-2(s+t+st)+3(st+s^{2}t+st^{2})-zs^{2}t^{2}}.\label{eq:queenpolygen}
\end{equation}
This means that for each $z\in\mathbb{C}$, there is a sufficiently
small $\delta>0$ so that (\ref{eq:queenpolycurrence}) holds for
all $|s|<\delta$ and $|t|<\delta$. Similar to the table of rook
polynomials above, we seek to understand the generating function for
the diagonal sequence $Q_{m,m}(z)$ and the zero distribution of related
polynomials. To achieve this goal, we first make the substitution
$s=x/t$ in (\ref{eq:queenpolygen}) to obtain the following 
\begin{align}
\sum_{m=0}^{\infty}\sum_{n=0}^{\infty}Q_{m,n}(z)x^{n}t^{m-n} & =\frac{1}{1-2(x/t+t+x)+3(x+x^{2}/t+xt)-zx^{2}}\nonumber \\
 & =\frac{t}{t^{2}(3x-2)+t(-zx^{2}+x+1)+3x^{2}-2x}.\label{eq:queenpolygensub}
\end{align}
Since $|s|<\delta$ and $|t|<\delta$, we have $\frac{|x|}{\delta}<|t|<\delta.$
We deduce from the equation above that the generating function 
\begin{equation}
\sum_{m=0}^{\infty}Q_{m,m}(z)x^{m}\label{eq:genqueendiagonal}
\end{equation}
 is the $t^{0}$-coefficient of the Laurent series of (\ref{eq:queenpolygensub})
in the annulus 
\begin{equation}
\frac{|x|}{\delta}<|t|<\delta.\label{eq:annulus}
\end{equation}
 To compute this coefficient, we apply partial fraction decomposition
to write (\ref{eq:queenpolygensub}) as 
\begin{equation}
\frac{\tau_{1}}{(3x-2)(\tau_{1}-\tau_{2})}\cdot\frac{1}{t-\tau_{1}}+\frac{\tau_{2}}{(3x-2)(\tau_{2}-\tau_{1})}\cdot\frac{1}{t-\tau_{2}}.\label{eq:sequence}
\end{equation}
where $\tau_{1}$ and $\tau_{2}$ are the two zeros (in $t$) of the
denominator of (\ref{eq:queenpolygensub}). The quadratic formula
gives
\[
\tau_{1}=\frac{zx^{2}-x-1+\sqrt{(-zx^{2}+x+1)^{2}-4x(3x-2)^{2}}}{2(3x-2)}
\]
and 
\[
\tau_{2}=\frac{zx^{2}-x-1-\sqrt{(-zx^{2}+x+1)^{2}-4x(3x-2)^{2}}}{2(3x-2)}.
\]
From (\ref{eq:annulus}), we have $|x|<\delta^{2}$. Thus for small
$\delta$, $x$is small and $\tau_{1}\sim x/4$ and $\tau_{2}\sim1/2$.
We conclude that for sufficiently small $\delta$, 
\[
\tau_{1}<|t|<\tau_{2}
\]
for any $t$ in the annulus (\ref{eq:annulus}). With these inequalities,
we apply the Laurent series expansions 
\begin{align*}
\frac{1}{t-\tau_{1}} & =\frac{1}{t}\frac{1}{1-\tau_{1}/t}=\sum_{n=0}^{\infty}\frac{\tau_{1}^{n}}{t^{n+1}},\\
\frac{1}{t-\tau_{2}} & =\frac{1}{\tau_{2}}\frac{1}{t/\tau_{2}-1}=-\sum_{n=0}^{\infty}\frac{t^{n}}{\tau_{2}^{n+1}},
\end{align*}
to conclude that (\ref{eq:genqueendiagonal}), which the $t^{0}$-coefficient
in the Laurent series expansion of \ref{eq:sequence}, is 
\begin{align*}
-\frac{1}{(3x-2)(\tau_{2}-\tau_{1})} & =\frac{1}{\sqrt{(-zx^{2}+x+1)^{2}-4x(3x-2)^{2}}}\\
 & =\frac{1}{\sqrt{x^{4}z^{2}+x^{3}(-2z-36)+x^{2}(49-2z)-14x+1}}.
\end{align*}
Similar to the idea that Gegenbauer polynomials are generalization
of Legendre polynomial, the generating function above motivates us
to define a sequence of polynomials 
\begin{equation}
\sum_{m=0}^{\infty}P_{m}(z)t^{m}=\frac{1}{(t^{4}z^{2}+t^{3}(-2z-36)+t^{2}(49-2z)-14t+1)^{\alpha}}.\label{eq:generalgen}
\end{equation}
We conjecture that for any $\alpha>0$, the zeros of $P_{m}(z)$ lie
on the interval $(-\infty,-9/4)$. In the next section, we will show
that the conjecture holds for $\alpha=1$. The method of the proof
in this paper provides us a direction in tackling the problems of
finding the zero distribution of sequence of polynomials whose denominator
of the generating function is nonlinear in $z$. For studies on the
case this denominator is linear in $z$ in its variations, see \cite{tz,ft}.

\section{Zero distribution of $P_{m}(z)$}

The main goal of this section is to prove the theorem below. 
\begin{thm}
\label{thm:maintheorem} The zeros of the polynomials $P_{m}(z)$
generated by 
\begin{equation}
\sum_{m}P_{m}(z)t^{m}=\frac{1}{t^{4}z^{2}+t^{3}(-2z-36)+t^{2}(49-2z)-14t+1}\label{eq:Pmgen}
\end{equation}
lie on the interval $(-\infty,-9/4)$. 
\end{thm}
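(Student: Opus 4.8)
The plan is to analyze the generating function in \eqref{eq:Pmgen} by factoring its denominator and using the resulting structure to locate the zeros of $P_m(z)$. First I would observe that the quartic denominator in $t$ factors in a way that reveals the relevant algebraic structure. Noting that the generating function arose from $1/\sqrt{(-zx^2+x+1)^2-4x(3x-2)^2}$, I would first rewrite the denominator as a difference of squares, namely $(-zt^2+t+1)^2-4t(3t-2)^2$, and then treat this as a quadratic in $z$. The key idea is that for fixed $t$, the condition that $z$ be real and the polynomial $P_m(z)$ vanish should translate into a statement about a contour in the $t$-plane over which the generating function can be expanded. The standard approach for these problems is to write $P_m(z)$ as a contour integral $\frac{1}{2\pi i}\oint t^{-m-1}/D(t,z)\,dt$ where $D$ is the denominator, and then study for which real $z<-9/4$ the integrand has the right behavior.

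The main technical step I would carry out is to parametrize the curve traced by the reciprocals of the roots of $D(t,z)=0$ as $z$ varies, and to show that requiring two of these roots to have equal modulus forces $z$ to lie in $(-\infty,-9/4)$. Concretely, solving $D(t,z)=0$ for $z$ gives $z = \frac{(t+1)^2 - 4t(3t-2)^2 \cdot(\text{correction})}{t^2}$ after the difference-of-squares manipulation; more carefully, setting $(-zt^2+t+1)^2 = 4t(3t-2)^2$ yields $-zt^2+t+1 = \pm 2\sqrt{t}(3t-2)$, so that $z = \frac{t+1 \mp 2\sqrt{t}(3t-2)}{t^2}$. The crucial observation is that with the substitution $t = e^{i\theta}$ or more likely $\sqrt{t}=re^{i\phi}$ restricted appropriately, the two branches coincide and the locus of $z$ values becomes real. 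I would then show that this real locus is exactly the ray $(-\infty,-9/4)$, with the endpoint $-9/4$ corresponding to the branch points merging.

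The rigorous argument establishing that the zeros actually lie on this locus (rather than merely that the locus is a candidate) is where I expect the main obstacle. The standard technique, following the works cited in \cite{tz,ft}, is to show that for $z$ on the proposed interval the denominator $D(t,z)$, viewed as a polynomial in $t$, has two roots of equal modulus, so that the contour integral for $P_m(z)$ picks up oscillatory contributions that can vanish; conversely, for $z$ off the interval the moduli are distinct and a saddle-point or dominant-root analysis shows $P_m(z)\neq 0$ for large $m$, with the finite cases handled separately. Establishing the equimodular condition and converting it into a sign-change or winding argument that guarantees $m$ zeros in the interval — typically via an explicit change of variable turning $P_m(z)$ into a trigonometric integral of the form $\int_0^\pi g(\theta)\sin((m+1)\theta+\psi(\theta))\,d\theta$ whose sign changes force zeros — is the delicate part. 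I would therefore focus my effort on finding the substitution that diagonalizes the quartic into a perfect-square-plus-remainder form, since identifying the correct uniformizing parameter is what makes the equimodular locus transparent and pins down the endpoint $-9/4$.
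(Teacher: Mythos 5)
Your proposal has the right general flavor (partial fractions / dominant-root analysis of the quartic denominator, converted into an oscillation count), but two of its central mechanisms do not match what actually happens here, and a third essential ingredient is missing entirely. First, the missing ingredient: you never bound the degree of $P_{m}(z)$. The recurrence coming from \eqref{eq:Pmgen} shows $\deg P_{m}\le\lfloor m/2\rfloor$, not $m$, because $z$ enters the denominator quadratically in the $t^{4}$ term and linearly elsewhere. This bound is indispensable: the interval $(-\infty,-9/4)$ carries only about $m/2$ zeros, so an argument calibrated to produce $m$ sign changes (your $\int_{0}^{\pi}g(\theta)\sin((m+1)\theta+\psi(\theta))\,d\theta$ template, with $\theta$ running over a full $(0,\pi)$) cannot succeed --- the relevant angle $\theta=\Arg t_{1}(z)$ in fact sweeps only $(0,\pi/2)$ as $z$ runs over $(-\infty,-9/4)$, yielding roughly $(m+1)/2$ sign changes, which is exactly enough to meet the degree bound and invoke the Fundamental Theorem of Algebra.

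Second, the equimodularity picture you propose is the wrong one for this problem. For every $z\in(-\infty,-9/4)$ the discriminant $\Disc_{t}D(t,z)=-256(z-4)(4z-15)^{2}(4z+9)^{2}$ is positive and a single numerical check shows all four roots are non-real, so they form two conjugate pairs $t_{1}=\overline{t_{2}}$, $t_{3}=\overline{t_{4}}$ with \emph{strictly different} moduli (equality would force $16z^{2}+36z=0$ via the Vieta relations). The only equimodular roots are the trivial conjugate pairs; the nontrivial equimodular locus you want to parametrize via $z=\bigl(t+1\mp2\sqrt{t}(3t-2)\bigr)/t^{2}$ is not where the zeros live. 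The actual mechanism is: write $-z^{2}P_{m}(z)=2\Re\bigl(A/t_{1}^{m+1}\bigr)+2\Re\bigl(C/t_{3}^{m+1}\bigr)$, prove the first term strictly dominates (this uses $|t_{1}|<|t_{3}|$ and $\Im t_{1}<\Im t_{3}$), and then count how many times $A/t_{1}^{m+1}$ crosses the real axis by tracking $\Delta\arg A-(m+1)\Delta\arg t_{1}$, where $\Delta\arg t_{1}=-\pi/2$ by a monotonicity-plus-endpoint analysis of $\Arg t_{1}(z)$. Your closing remark that one shows $P_{m}(z)\ne0$ off the interval ``for large $m$, with the finite cases handled separately'' is the Beraha--Kahane--Weiss-type limiting statement, which identifies the accumulation set of zeros but does not prove the theorem for every $m$; handling every $m$ is precisely what the degree bound plus the $\lfloor m/2\rfloor$ sign-change count accomplishes.
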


To prove Theorem \ref{thm:maintheorem}, we will count the number
of zeros of $P_{m}(z)$ on $(-\infty,-9/4)$ and show that this number
is at least the degree of $P_{m}(z)$. Theorem \ref{thm:maintheorem}
will follow directly from the Fundamental Theorem of Algebra. The
theorem below provides an upper bound for the degree of $P_{m}(z)$. 
\begin{lem}
\label{lem:degree}The degree of $P_{m}(z)$ is at most $\lfloor\frac{m}{2}\rfloor$. 
\end{lem}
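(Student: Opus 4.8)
The plan is to read off the degree bound directly from the structure of the generating function in (\ref{eq:Pmgen}). Observe that the variable $z$ appears in the denominator only through the two terms $t^4 z^2$ and $t^3(-2z) + t^2(-2z)$, i.e. the denominator is a quadratic in $z$ whose coefficients are polynomials in $t$:
\begin{equation*}
D(t,z) = 1 + \bigl(t^4\bigr)z^2 + \bigl(-2t^3 - 2t^2\bigr)z + \bigl(-36t^3 + 49t^2 - 14t\bigr).
\end{equation*}
The key point is bookkeeping of $t$-degrees: the coefficient of $z^2$ is $t^4$ (carrying a factor $t^4$), the coefficient of $z^1$ is divisible by $t^2$, and the $z$-free part is divisible by $t$ but has nonzero constant term $1$. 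I would make this precise by writing $D(t,z) = 1 - tE(t,z)$, where $E(t,z)$ is a polynomial, and then expand
\begin{equation*}
\frac{1}{D(t,z)} = \sum_{k=0}^{\infty} t^k E(t,z)^k,
\end{equation*}
which converges as a formal power series in $t$ since $D$ has constant term $1$.

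The main step is then to track, for each power $t^m$, the highest power of $z$ that can appear. First I would record how each monomial in $D$ pairs its $t$-degree with its $z$-degree: the $z^2$ term costs four powers of $t$ per power of $z^2$ (ratio $t^4/z^2$, i.e. two $t$'s per $z$), the two $z^1$ terms cost at least two powers of $t$ per $z$ (ratios $t^3/z$ and $t^2/z$), and every $z$-free term that contributes to raising the $t$-degree without raising the $z$-degree costs at least one $t$. The claim to establish is that producing one factor of $z$ always consumes at least two factors of $t$; consequently, to reach a term with $z^d$ one needs $t^m$ with $m \ge 2d$, giving $d \le \lfloor m/2 \rfloor$. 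This can be formalized by an induction on $m$ using the recurrence that $\{P_m(z)\}$ satisfies (the denominator $D(t,z)$ gives the relation $P_m = tE$-convolution, concretely the linear recurrence with coefficients $14, -(49-2z), -( -2z-36)\!$, $-z^2$ coming from $D$), together with the base cases $P_0 = 1$ and the observation that the only coefficient of $z^2$ sits at $t^4$.

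The cleanest formalization I would adopt assigns to the variable $z$ a weight and to $t$ a weight so that every monomial in $D(t,z)$ other than the constant $1$ has total weight at least the weight of $t^2$ relative to each $z$. Concretely, give $z$ weight $2$ and $t$ weight $-1$, and check that each non-constant monomial $c\,t^a z^b$ of $D$ satisfies $a \ge 2b$ (indeed $t^4z^2$ gives $4\ge 4$, $t^3 z$ and $t^2 z$ give $3,2 \ge 2$, and the $z$-free terms $t^3,t^2,t$ trivially satisfy $a\ge 0$). Since this inequality is preserved under multiplication, every monomial of $E(t,z)^k$, and hence of $t^k E(t,z)^k$, inherits $a \ge 2b$; summing over $k$ shows the coefficient of $t^m$, namely $P_m(z)$, has $\deg_z P_m \le \lfloor m/2\rfloor$. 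The only mild obstacle is verifying that the weight inequality is genuinely preserved under the convolution/product — this is immediate from additivity of weights, so no real difficulty arises, and the degree bound follows.
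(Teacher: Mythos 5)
Your argument is correct. The paper proves the lemma by extracting the four-term recurrence
\[
P_{m}(z)-14P_{m-1}(z)+(49-2z)P_{m-2}(z)+(-2z-36)P_{m-3}(z)+z^{2}P_{m-4}(z)=0
\]
from the generating function and running an induction on $m$, bounding $\deg P_m$ by the maximum of $\lfloor\frac{m-1}{2}\rfloor$, $\lfloor\frac{m-2}{2}\rfloor+1$, $\lfloor\frac{m-3}{2}\rfloor+1$, and $\lfloor\frac{m-4}{2}\rfloor+2$. Your primary route is instead a grading argument: you write $D(t,z)=1-tE(t,z)$, expand $1/D$ as the geometric series $\sum_k t^kE(t,z)^k$, observe that every non-constant monomial $c\,t^az^b$ of $D$ satisfies $a\ge 2b$, and note that this inequality is additive under multiplication of monomials, so every monomial appearing in the expansion inherits it, giving $\deg_z P_m\le\lfloor m/2\rfloor$ directly. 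The two arguments encode the same bookkeeping (the recurrence is just the convolution identity the geometric series produces, and you yourself flag the recurrence as an alternative formalization), but your weight argument avoids the case analysis over the four shifted floor functions and makes transparent \emph{why} the bound is $\lfloor m/2\rfloor$: each power of $z$ in $D$ is accompanied by at least two powers of $t$. One should only add the small remark, which you implicitly use, that for each fixed $m$ only finitely many $k$ contribute to the coefficient of $t^m$ since $tE(t,z)$ has no constant term in $t$; with that noted, the proof is complete.
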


\begin{proof}
From (\ref{eq:Pmgen}), the sequence $\left\{ P_{m}(z)\right\} _{m=0}^{\infty}$
satisfies the recurrence relation 
\[
P_{m}(z)-14P_{m-1}(z)+(49-2z)P_{m-2}(z)+(-2z-36)P_{m-3}(z)+z^{2}P_{m-4}(z)=0
\]
for $m\ge1$ with initial condition $P_{0}(z)=1$ and $P_{-m}=0$.
From the recurrence above and the induction hypothesis, we have 
\[
\deg(P_{m}(z))\le\max\left(\Bigl\lfloor\frac{m-1}{2}\Bigr\rfloor,\Bigl\lfloor\frac{m-2}{2}\Bigr\rfloor+1,\Bigl\lfloor\frac{m-3}{2}\Bigr\rfloor+1,\Bigl\lfloor\frac{m-4}{2}\Bigr\rfloor+2\right)=\Bigl\lfloor\frac{m}{2}\Bigr\rfloor.
\]
\end{proof}
It remains show that number of zeros of $P_{m}(z)$ on $(-\infty,-9/4)$
is at least $\lfloor\frac{m}{2}\rfloor$. For this reason, we assume
$z\in(-\infty,-9/4)$. For each $z\in(-\infty,-9/4)$, let $t_{1}$,
$t_{2}$, $t_{3}$, and $t_{4}$ be the zeros (in $t$) of 
\begin{equation}
D(t,z):=t^{4}z^{2}+t^{3}(-2z-36)+t^{2}(49-2z)-14t+1.\label{eq:denominator}
\end{equation}
We will show that for $z\in(-\infty,-9/4)$, these zeros are not real.
A useful concept in proving this is the discriminant of a polynomial
defined below. 
\begin{defn}
The discriminant of a polynomial $P(z)$ with lead coefficient $p$
and degree $n$ is 
\[
\Disc_{z}P(z)=p^{2n-2}\prod_{1\le i<j\le n}(z_{i}-z_{j})^{2}
\]
where $z_{i}$, $1\le i\le n$, are the zeros of $P(z)$. 
\end{defn}

From this definition, $\Disc_{z}P(z)=0$ if and only if $P(z)$ has
a multiple zero. In the case, the degree of $P(z)$ is 4, $\Disc_{z}P(z)>0$
if and only if either all the zeros of $P(z)$ are real or none of
these zeros are real \cite{lt}. For further studies of discriminants
of various polynomials, see \cite{apostol,ds,gi}.
\begin{lem}
\label{lem:distinct} For each $z\in(-\infty,-9/4)$, the four zeros
of $D(t,z)$ are distinct and not real. 
\end{lem}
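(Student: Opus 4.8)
The plan is to prove Lemma~\ref{lem:distinct} in two stages: first establish that $D(t,z)$ has no real zeros for $z\in(-\infty,-9/4)$, then separately rule out complex multiple zeros via the discriminant. Showing the zeros are not real is the more hands-on step. I would fix $z<-9/4$ and examine the quartic $D(t,z)$ as a polynomial in $t$ with real coefficients. One concrete approach is to return to the factorization implicit in the quadratic formula already used in the introduction: recall that $D$ arose as the denominator whose two $t$-roots were $\tau_{1},\tau_{2}$, coming from the discriminant expression $(-zx^{2}+x+1)^{2}-4x(3x-2)^{2}$. In fact $D(t,z)$ equals that radicand (with $x\to t$), so
\[
D(t,z)=(-zt^{2}+t+1)^{2}-4t(3t-2)^{2}.
\]
A real zero $t$ of $D$ would force $(-zt^{2}+t+1)^{2}=4t(3t-2)^{2}$, and since the right side is $4t(3t-2)^{2}\ge 0$ only when $t\ge 0$, any real root must have $t\ge 0$; taking square roots reduces the problem to analyzing $-zt^{2}+t+1=\pm 2\sqrt{t}\,(3t-2)$ on $[0,\infty)$.

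**Next I would** exploit the sign of $z$. For $z<-9/4$ we have $-z>9/4>0$, so on $t\ge 0$ the left side $-zt^{2}+t+1$ is strictly positive and grows quadratically with a large positive leading coefficient. The substitution $t=u^{2}$ (legitimate since $t\ge0$) turns each of the two equations $-zu^{4}+u^{2}+1=\pm 2u(3u^{2}-2)$ into a quartic in $u$, and I expect to show neither has a real solution by checking that the relevant quartic stays strictly positive. The threshold $-9/4$ should emerge precisely here: at $z=-9/4$ the coefficient $-z=9/4$ is exactly the value making the discriminant-type expression develop a real double root, and for $z<-9/4$ the positivity becomes strict. I would verify this by computing the minimum of the appropriate quartic and showing it is positive exactly when $z<-9/4$, which pins down the endpoint and confirms no real $t$-roots exist.

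**Then, to rule out multiple (complex) zeros,** I would compute $\Disc_{t}D(t,z)$ as an explicit polynomial in $z$ and show it is nonzero on $(-\infty,-9/4)$. Since we have already established that no zeros are real on this interval, the four roots come in two conjugate pairs; a multiple root would require either a repeated complex root or a coincidence between the two conjugate pairs, and $\Disc_{t}D=0$ detects exactly the repeated-root case. Thus it suffices to exhibit that the discriminant, a concrete polynomial in $z$, has no root in $(-\infty,-9/4)$; I would factor it and inspect its real roots, expecting $z=-9/4$ to be a boundary root and the remaining roots to lie outside the interval. Combined with the fact established earlier in the excerpt that a quartic with positive discriminant has either all-real or all-complex roots, the sign of $\Disc_{t}D$ on this interval also reconfirms the non-real conclusion as a consistency check.

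**The main obstacle** I anticipate is the first stage: cleanly proving positivity of the reduced quartic(s) in $u$ uniformly for all $z<-9/4$, since the coefficients depend on $z$ and the worst case sits exactly at the endpoint $z=-9/4$. The computation must be arranged so that the boundary value $-9/4$ appears naturally as the place where strict positivity degenerates to a tangency, rather than being inserted by hand; getting the algebra to expose this threshold transparently — rather than burying it in a large resultant — is the delicate part, and I would look for a factorization of $D$ or of the reduced quartic that makes the critical value manifest.
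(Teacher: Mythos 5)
Your route is sound but genuinely different from the paper's. The paper handles both claims with the single computation $\Disc_{t}D(t,z)=-256(z-4)(4z-15)^{2}(4z+9)^{2}>0$ on $(-\infty,-9/4)$: positivity gives distinctness, and the quartic criterion (positive discriminant iff all roots real or none real), together with the fact that the real-root count cannot change while the discriminant stays nonzero, reduces non-reality to a single numerical check at $z=-3$. You instead prove non-reality directly from the difference-of-squares form $D(t,z)=(-zt^{2}+t+1)^{2}-4t(3t-2)^{2}$ and reserve the discriminant only for ruling out repeated roots. The one step you flag as delicate does resolve cleanly: a real root forces $t\ge 0$, and with $t=u^{2}$ the two sign choices combine (via $u\mapsto-u$) into the single quartic $(-z)u^{4}-6u^{3}+u^{2}+4u+1$, which for $w=-z$ satisfies
\[
wu^{4}-6u^{3}+u^{2}+4u+1=\Bigl(w-\tfrac{9}{4}\Bigr)u^{4}+\Bigl(\tfrac{3}{2}u^{2}-2u-1\Bigr)^{2}>0
\]
for all real $u$ when $w>9/4$, since the two terms can only vanish simultaneously at $u=0$, where the value is $1$. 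This makes the threshold $-9/4$ completely transparent (at $w=9/4$ the expression degenerates to a perfect square with real double roots, matching the paper's $D(t,-9/4)=\tfrac{1}{16}(9t^{2}-28t+4)^{2}$) and avoids both the all-real-or-none criterion and the spot check at $z=-3$; the price is the extra algebra above, whereas the paper's argument is a one-line computer-algebra computation plus one evaluation. Both approaches are correct.
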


\begin{proof}
From a computer algebra system, the discriminant of $D(t,z)$ as a
polynomial in $t$ is 
\[
\Disc_{t}D(t,z)=-256(z-4)(4z-15)^{2}(4z+9)^{2}>0
\]
for $z\in(-\infty,-9/4)$. Thus either (i) all zeros of $D(t,z)$
are real for all $z\in(-\infty,-9/4)$ or (ii) none of zeros of $D(t,z)$
is real for all $z\in(-\infty,-9/4)$. When $z=-3$, from (\ref{eq:denominator})
we can check that the following polynomial 
\[
D(t,-3)=9t^{4}-30t^{3}+55t^{2}-14t+1
\]
has four non-real zeros. Thus all the zeros of $D(t,z)$ are non-real
for all $z\in(-\infty,-9/4)$. 
\end{proof}
Since $D(t,z)$ is a real polynomial (for $z\in(-\infty,-9/4)$),
their non-real zeros, denoted by $t_{1}$, $t_{2}$, $t_{3}$, and
$t_{4}$, form conjugate pairs. Without loss of generality, we let
$t_{1}=\overline{t_{2}}$, $t_{3}=\overline{t_{4}}$, $|t_{1}|\le|t_{3}|$,
and $t_{1}$ and $t_{3}$ lie on the upper half plane. We can write
these zeros as $t_{1}=re^{i\theta}$, $t_{2}=re^{-i\theta}$, $t_{3}=\varrho e^{i\phi}$,
and $t_{4}=\varrho e^{-i\phi}$ where $0<r\le\rho$. These zeros satisfy
the Vieta's formulas

\begin{align*}
t_{1}+t_{2}+t_{3}+t_{4} & =\frac{2z+36}{z^{2}},\\
t_{1}t_{2}+t_{1}t_{3}+t_{1}t_{4}+t_{2}t_{3}+t_{2}t_{4}+t_{3}t_{4} & =\frac{-2z+49}{z^{2}},\\
t_{1}t_{2}t_{3}+t_{1}t_{2}t_{4}+t_{1}t_{3}t_{4}+t_{2}t_{3}t_{4} & =\frac{14}{z^{2}},\\
t_{1}t_{2}t_{3}t_{4} & =\frac{1}{z^{2}}.
\end{align*}
The first equation is the same as 
\begin{equation}
2r\cos\theta+2\varrho\cos\phi=\frac{2z+36}{z^{2}}.\label{eq:firstsym}
\end{equation}
Similarly the second equation is equivalent to 
\[
r^{2}+\varrho^{2}+r\varrho e^{-i(\theta+\phi)}+r\varrho e^{i(\theta-\phi)}=\frac{-2z+49}{z^{2}}
\]
where the left side is 
\[
r^{2}+\varrho^{2}+2r\varrho\cos(\theta+\phi)+2r\varrho\cos(\theta-\phi)=r^{2}+\varrho^{2}+4r\varrho\cos\theta\cos\phi.
\]
The third equation gives

\[
2r^{2}\varrho\cos\phi+2r\varrho^{2}\cos\theta=\frac{14}{z^{2}}.
\]
Using 
\[
r\varrho=-\frac{1}{z}
\]
from the fourth equation (as $z<0$) we can rewrite this equation
as 
\begin{equation}
2r\cos\phi+2\varrho\cos\theta=\frac{-14}{z}.\label{eq:thirdfourthsym}
\end{equation}
Recall that $|t_{1}|\le|t_{3}|$. From these elementary symmetric
equations, we can show that this inequality is strict in the lemma
below. 
\begin{lem}
\label{lem:zerostrictineq} For any $z\in(-\infty,-9/4)$, we have
$|t_{1}|=|t_{2}|<|t_{3}|=|t_{4}|$. 
\end{lem}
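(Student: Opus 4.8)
The plan is to argue by contradiction, exploiting the symmetry already built into the two Vieta identities (\ref{eq:firstsym}) and (\ref{eq:thirdfourthsym}). From Lemma \ref{lem:distinct} together with the normalization $|t_{1}|=|t_{2}|=r\le\varrho=|t_{3}|=|t_{4}|$, the only way the desired strict inequality can fail is $r=\varrho$. Note that distinctness of the zeros by itself does not preclude this, since one could in principle still have $r=\varrho$ with $\theta\ne\phi$; so the content of the lemma is genuinely to rule out $r=\varrho$, and it suffices to show this equality is impossible for $z\in(-\infty,-9/4)$.

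First I would substitute $r=\varrho$ into (\ref{eq:firstsym}) and (\ref{eq:thirdfourthsym}). The left-hand side of (\ref{eq:firstsym}) becomes $2r(\cos\theta+\cos\phi)$, and the left-hand side of (\ref{eq:thirdfourthsym}) becomes $2r(\cos\phi+\cos\theta)$, which is the very same quantity. Hence the two equations force their right-hand sides to coincide:
\[
\frac{2z+36}{z^{2}}=\frac{-14}{z}.
\]
Next I would clear denominators: since $z\ne0$, multiplying through by $z^{2}$ yields $2z+36=-14z$, so $16z=-36$ and $z=-9/4$. This contradicts the hypothesis $z\in(-\infty,-9/4)$, so $r\ne\varrho$, and combining with $r\le\varrho$ gives $r<\varrho$, which is precisely the assertion $|t_{1}|=|t_{2}|<|t_{3}|=|t_{4}|$.

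The computation is routine; the only real content is the observation that under $r=\varrho$ the two highlighted symmetric combinations collapse to the same expression, which is exactly why these two of the four Vieta relations are the relevant ones to compare. I do not anticipate a genuine obstacle. In fact, the borderline value $z=-9/4$ emerging here is precisely the endpoint excluded from the interval, which serves as a reassuring consistency check that the restriction to $(-\infty,-9/4)$ is sharp for this step rather than an arbitrary choice.
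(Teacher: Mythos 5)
Your proof is correct and is essentially identical to the paper's: both assume $r=\varrho$ for contradiction, observe that the left-hand sides of (\ref{eq:firstsym}) and (\ref{eq:thirdfourthsym}) then coincide, equate the right-hand sides, and derive $z=-9/4$ (the paper writes the resulting equation as $16z^{2}+36z=0$, which excludes $z=0$ and $z=-9/4$, the same conclusion). No differences worth noting.
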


\begin{proof}
If, by the way of contradiction, that $r=\rho$, then (\ref{eq:firstsym})
and (\ref{eq:thirdfourthsym}) yield 
\[
\frac{2z+36}{z^{2}}=\frac{-14}{z}
\]
or equivalently 
\[
16z^{2}+36z=0
\]
which is a contradiction since $z\in(-\infty,-9/4).$ 
\end{proof}
Recall that for each $z\in(-\infty,-9/4)$, $\theta$ is the principal
angle of $t_{1}=t_{1}(z)$. Thus we can view $\theta$ as a function
of $z\in(-\infty,-9/4)$. 
\begin{lem}
If $t_{1}=re^{i\theta}$, then $\theta(z)$ is a decreasing function
on $z\in(-\infty,-9/4)$. 
\end{lem}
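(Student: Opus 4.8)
The plan is to prove the stronger pointwise statement $\theta'(z)<0$ for every $z\in(-\infty,-9/4)$. Writing $\theta(z)=\Im\Log t_{1}(z)$ for the distinguished non-real root singled out above, differentiation gives $\theta'(z)=\Im\bigl(t_{1}'(z)/t_{1}(z)\bigr)$, and implicit differentiation of the identity $D(t_{1}(z),z)\equiv0$ yields $t_{1}'(z)=-D_{z}(t_{1},z)/D_{t}(t_{1},z)$, where $D_{t}(t_{1},z)\ne0$ on the open interval because $D$ has only simple roots there by Lemma \ref{lem:distinct}. Hence it suffices to control the sign of $\Im\bigl(D_{z}(t_{1},z)/(t_{1}D_{t}(t_{1},z))\bigr)$.

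The computation becomes tractable once $D(t,z)$ is read as a quadratic in $z$. One has $D(t,z)=t^{4}z^{2}-2t^{2}(t+1)z-(t-1)(9t-1)(4t-1)$, whose discriminant in $z$ is the perfect square (up to powers of $t$) $\Disc_{z}D=16t^{5}(3t-2)^{2}$, so that each root satisfies the clean relation $zt^{2}-t-1=\pm2\sqrt{t}(3t-2)$. Since $D_{z}=2t^{2}(zt^{2}-t-1)$, substituting this relation and eliminating $z^{2}$ from $D_{t}$ by means of $D=0$ makes the common factor $(3t-2)$ and the excess powers of the square root cancel. Setting $w:=\sqrt{t_{1}}$ and choosing the branch appropriate to $t_{1}$, I expect the derivative to collapse to
\[
\theta'(z)=-\,\Im\!\left(\frac{w^{4}}{(w-1)(3w^{2}+4w-2)}\right),
\]
so the whole lemma reduces to showing that this explicit rational function has positive imaginary part along the relevant arc.

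Next I would pin down that arc. As $z\to-\infty$ the relation $r\varrho=-1/z\to0$ forces $t_{1}\to0$ with $t_{1}\sim i/\sqrt{|z|}$, so $\arg w\to\pi/4$ and $\theta\to\pi/2$; as $z\to(-9/4)^{-}$ the discriminant factor $(4z+9)^{2}$ signals a double root of $D(\cdot,z)$, which is precisely the real point $t_{1}\to w_{*}^{2}$ with $3w_{*}^{2}+4w_{*}-2=0$, $w_{*}=(-2+\sqrt{10})/3$, so $\theta\to0$. Thus $w=\sqrt{t_{1}(z)}$ traces an arc in the open first quadrant from the origin (tangent to the ray $\arg=\pi/4$) to the real endpoint $w_{*}$, and $\theta$ ranges monotonically over $(0,\pi/2)$ if the sign is as claimed.

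The main obstacle is the final positivity $\Im\bigl(w^{4}/((w-1)(3w^{2}+4w-2))\bigr)>0$ along this arc. This genuinely requires the defining constraint that $z(w)$ be real: a direct check shows the imaginary part changes sign at nearby off-curve points, so no argument on the whole quadrant can work. I would encode the curve as $F(\rho,\psi):=6\rho^{3}\sin\psi+\rho^{2}\sin2\psi-4\rho\sin3\psi+\sin4\psi=0$ with $w=\rho e^{i\psi}$, or equivalently verify the argument inequality $4\psi-\arg(w-1)-\arg(w-w_{*})-\arg(w-w_{**})\in(0,\pi)$, where $w_{**}=(-2-\sqrt{10})/3$; here $\rho<w_{*}<1$ places $w-1$ in the second quadrant and keeps $w-w_{**}$ near the positive real axis, so tracking each factor's argument and substituting $F=0$ should reduce the claim to a single-variable inequality in $\psi$. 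Proving that inequality — or, as a weaker but sufficient alternative, merely ruling out $\theta'(z)=0$ on the open interval and fixing the sign from the endpoint data $\theta\to\pi/2$ versus $\theta\to0$ — is where the real work lies.
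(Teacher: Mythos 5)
Your setup is sound and your algebra on $D$ read as a quadratic in $z$ is correct: the $z$-discriminant is indeed $16t^{5}(3t-2)^{2}$, and the relation $zt^{2}-t-1=\pm2\sqrt{t}\,(3t-2)$ is exactly the identity the paper exploits (in the equivalent form that $zt_{1}^{2}-t_{1}-1=0$ would force $D(t_{1},z)=-4t_{1}(3t_{1}-2)^{2}=0$, impossible for non-real $t_{1}$). But the proposal stops short of the decisive step. Your main route reduces the lemma to the positivity of $\Im\bigl(w^{4}/((w-1)(3w^{2}+4w-2))\bigr)$ along an implicitly defined arc, and you say yourself that proving this ``is where the real work lies''; moreover the closed form for $\theta'(z)$ is only conjectured (``I expect the derivative to collapse to''), so neither the formula nor the inequality is established. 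As written this is a research plan, not a proof.

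Your fallback --- rule out $\theta'(z)=0$ on the open interval and fix the sign from the boundary behaviour $\theta\to\pi/2$ versus $\theta\to0$ --- is essentially the paper's strategy (the paper fixes the sign from two numerically computed sample values $\theta(-10)>\theta(-3)$ rather than from the endpoint limits, which it only establishes in the following lemma). But there is a genuine gap here too: what your algebra actually delivers is $D_{z}(t_{1},z)\neq0$, hence $t_{1}'(z)\neq0$, and that alone does not rule out $\theta'(z)=0$, because $\theta'=\Im\bigl(t_{1}'/t_{1}\bigr)$ vanishes whenever $t_{1}'/t_{1}$ is real, i.e.\ whenever the root curve moves radially. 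The paper closes exactly this loophole by writing $d\theta/dz$ as the product of $dt_{1}/dz$ with a second factor $d\theta/dt_{1}=1/\bigl(e^{i\theta}(dr/d\theta+ir)\bigr)$ and arguing both are nonzero, so that $d\theta/dz$ is a continuous nonvanishing real function and $\theta$ is monotone. You would need some analogue of that second factor, or a direct proof that $t_{1}'/t_{1}\notin\mathbb{R}$ on the interval, before the endpoint (or sample-value) comparison can be invoked.
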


\begin{proof}
By the chain rule, 
\[
\frac{d\theta}{dz}=\frac{dt_{1}}{dz}\cdot\frac{d\theta}{dt_{1}}.
\]
We will show that $d\theta/dz\ne0$ by showing that each term on the
right side is nonzero. We differentiate both sides of 
\[
t_{1}^{4}z^{2}+t_{1}^{3}(-2z-36)+t_{1}^{2}(49-2z)-14t_{1}+1=0
\]
with respect to $z$ and obtain 
\[
\frac{dt_{1}}{dz}=-\frac{D_{z}(t_{1},z)}{D_{t}(t_{1},z)}=-\frac{2zt_{1}^{4}-2t_{1}^{3}-2t_{1}^{2}}{4t_{1}^{3}z^{2}+3t_{1}^{2}(-2z-36)+2t_{1}(49-2z)-14}.
\]
We note that the denominator of the last expression is nonzero since
$D_{t}(t_{1},z)\ne0$ as $t_{1}$ is a simple zero of $D(t,z)$ by
Lemma \ref{lem:distinct}. Thus $dt_{1}/dz$ is a continuous function
in $z\in(-\infty,-9/4)$. We claim that the numerator of this expression,
\[
2t_{1}^{2}(zt_{1}^{2}-t_{1}-1),
\]
is also nonzero. Indeed, if by contradiction $zt_{1}^{2}-t_{1}-1=0$.
Then, $z=\frac{t_{1}+1}{t_{1}^{2}}$. We substitute this value of
$z$ into $D(t_{1},z)=0$ and conclude 
\[
t_{1}^{4}z^{2}+t_{1}^{3}(-2z-36)+t_{1}^{2}(49-2z)-14t_{1}+1=-4t_{1}(-2+3t_{1})^{2}=0
\]
which is a contradiction since $t_{1}\notin\mathbb{R}$. Thus, we
can conclude $\frac{dt_{1}}{dz}\ne0$ for $z\in(-\infty,-9/4)$.

We will show that $d\theta/dz$ is also nonzero for $z$ in this interval.
We differentiate both sides of $t_{1}=re^{i\theta}$ with respect
to $\theta$ and conclude that 
\[
\frac{dt_{1}}{d\theta}=e^{i\theta}(\frac{dr}{d\theta}+ir),
\]
or equivalently 
\[
\frac{d\theta}{dt_{1}}=\frac{1}{e^{i\theta}(\frac{dr}{d\theta}+ir)}\ne0.
\]
Note that the denominator of the last expression is nonzero since
$dr/d\theta\in\mathbb{R}$ and thus $d\theta/dt_{1}$ is continuous
on $z\in(-\infty,-9/4)$.

Since 
\[
\frac{d\theta}{dz}=\frac{dt_{1}}{dz}\cdot\frac{d\theta}{dt_{1}}
\]
is a continuous function in $z\in(-\infty,-9/4)$ and it has no zero
on this interval, $\theta(z)$ is monotone on $z\in(-\infty,-9/4)$.
Thus, to complete this lemma, we compare two values of $\theta(z)$
at two different values of $z$. From a simple computer algebra, we
have $\theta(-10)=0.55491..$ and $\theta(-3)=0.206599...$ from which
the lemma follows.
\end{proof}
Now that we know $\theta(z)$ is decreasing, the lemma below provides
the image of $(-\infty,-9/4)$ under this map. 
\begin{lem}
\label{lem:decreasing} The function $\theta(z)$ maps $(-\infty,-9/4)$
onto $(0,\pi/2)$. 
\end{lem}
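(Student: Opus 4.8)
The plan is to exploit the preceding lemma, which shows that $\theta(z)$ is continuous and strictly decreasing on $(-\infty,-9/4)$. A continuous strictly decreasing function maps an open interval bijectively onto the open interval bounded by its one‑sided limits, so the image is
\[
\left(\lim_{z\to(-9/4)^{-}}\theta(z),\ \lim_{z\to-\infty}\theta(z)\right),
\]
the right limit being the supremum (approached as $z\to-\infty$, where $\theta$ is largest) and the left limit the infimum. By the intermediate value theorem it therefore suffices to prove that these two limits equal $\pi/2$ and $0$ respectively.

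For the limit as $z\to-\infty$ I would rescale the root. Writing $z=-w$ with $w\to+\infty$ and substituting $t=s/\sqrt{w}$ into $D(t,z)$, the leading term $z^{2}t^{4}=w^{2}t^{4}$ becomes exactly $s^{4}$, and a direct computation gives
\[
D\!\left(\tfrac{s}{\sqrt{w}},-w\right)=s^{4}+2s^{2}+1+O\!\left(w^{-1/2}\right)\longrightarrow (s^{2}+1)^{2}
\]
uniformly on compact subsets of the $s$‑plane. Since the degree stays $4$, Hurwitz's theorem (continuity of roots in the coefficients) shows the rescaled roots $t_{j}\sqrt{w}$ converge to the roots $\pm i$ of $(s^{2}+1)^{2}$. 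Because $t_{1}$ lies in the upper half‑plane and multiplication by $\sqrt{w}>0$ preserves arguments, $t_{1}\sqrt{w}\to i$, whence $\theta(z)=\Arg t_{1}\to\Arg i=\pi/2$.

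For the limit as $z\to(-9/4)^{-}$ I would use that the discriminant from Lemma \ref{lem:distinct} carries the factor $(4z+9)^{2}$, which vanishes at $z=-9/4$, so roots collide there. The decisive input is the explicit factorization
\[
D\!\left(t,-\tfrac{9}{4}\right)=\frac{81}{16}\left(t^{2}-\frac{28}{9}t+\frac{4}{9}\right)^{2},
\]
whose quadratic factor has positive discriminant $640/81$ and hence two \emph{distinct positive real} double roots $t_{-}<t_{+}$. By continuity of the roots in $z$, the conjugate pair $t_{1}=\overline{t_{2}}=re^{i\theta}$ converges to a single real value in $\{t_{-},t_{+}\}$, and likewise for $t_{3},t_{4}$; since $r=|t_{1}|\le|t_{3}|=\varrho$ and the four limits must fill the multiset $\{t_{-},t_{-},t_{+},t_{+}\}$, the inner pair is forced to the smaller root, so $r\to t_{-}>0$ and $\theta=\Arg t_{1}\to\Arg t_{-}=0$.

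Combining the two paragraphs, $\theta$ maps $(-\infty,-9/4)$ onto $(0,\pi/2)$. The main obstacle is precisely the analysis at the right endpoint: the vanishing of the discriminant only tells us that a pair of roots coalesces, and a priori this could be a repeated \emph{non‑real} root (which would force $r=\varrho$, $\theta=\phi$, and a strictly positive limit for $\theta$, consistent with the boundary of Lemma \ref{lem:zerostrictineq}). It is only the explicit factorization above, certifying that the repeated roots are real and positive, that pins the limiting value of $\theta$ down to $0$ and thereby closes the interval at the correct endpoint.
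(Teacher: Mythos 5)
Your proof is correct, and its overall architecture is the same as the paper's: monotonicity from the preceding lemma reduces the problem to the two one-sided limits, the factorization of $D(t,-9/4)$ as a perfect square of a quadratic with positive real roots gives $\theta\to 0$ at the right endpoint, and an asymptotic analysis of $t_1$ gives $\theta\to\pi/2$ at the left. The one genuine difference is how the limit at $-\infty$ is computed. The paper works directly with $D(t_1,z)=0$: it first shows $t_1\to 0$, then proves $\limsup_{z\to-\infty}|t_1^3z|=0$ by contradiction, and finally extracts $t_1^2z\to 1$ from the reduced equation $u^2-2u+1=0$, concluding $\Arg t_1\to\pi/2$ because $z<0$. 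Your rescaling $t=s/\sqrt{-z}$, which turns $D$ into $(s^2+1)^2+O(|z|^{-1/2})$ uniformly on compacta and lets Hurwitz's theorem (continuity of roots in the coefficients) deliver $t_1\sqrt{-z}\to i$ in one stroke, is cleaner and avoids the paper's somewhat delicate $\limsup$ bookkeeping; it produces the same leading asymptotic $t_1\sim i/\sqrt{-z}$ that the paper records in the remark following this lemma. Your extra care at $z\to-9/4$ in identifying \emph{which} double root $t_1$ tends to is not actually needed for the conclusion (both roots of $9t^2-28t+4$ are positive, so $\Arg t_1\to 0$ either way), but it is correct and consistent with what the paper establishes later in the proof of Lemma \ref{lem:numzeros}.
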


\begin{proof}
Recall that $t_{1}(z)$ is a zero of 
\[
D(t,z)=t^{4}z^{2}+t^{3}(-2z-36)+t^{2}(49-2z)-14t+1.
\]
A simple evaluation yields 
\[
\lim_{z\to-\frac{9}{4}}D(t,z)=\frac{1}{16}(4-28t+9t^{2})^{2}
\]
where the zeros of the last expression are positive real. Thus 
\[
\lim_{z\rightarrow-9/4}\theta(z)=\lim_{z\rightarrow-9/4}\Arg t_{1}=0.
\]
Next, we consider the case $z\rightarrow-\infty$. First, we note
that as $z$ approaches $-\infty$, $t_{1}$ must approach 0, since
if otherwise the modulus of the term $t_{1}^{4}z^{2}$ in $D(t_{1},z)$
is larger than those of all the terms of $D(t_{1},z)$ when $|z|$
is large. This contradicts to $D(t_{1},z)=0$.

We next show that $\lim_{z\rightarrow-\infty}|t_{1}^{3}z|$ exists
and equals to $0$ by showing that 

\[
\limsup_{z\rightarrow-\infty}|t_{1}^{3}z|=0
\]
where the left side of the expression above is defined as 
\[
\lim_{x\rightarrow-\infty}\sup\{|t_{1}^{3}z|:z\in(-\infty,x)\}.
\]
If, by contradiction, 
\[
\limsup_{z\rightarrow-\infty}|t_{1}^{3}z|\ne0
\]
then we have 
\[
\limsup_{z\rightarrow-\infty}|t_{1}z|=\limsup_{z\rightarrow-\infty}\left|\frac{t_{1}^{3}z}{t_{1}^{2}}\right|=\infty
\]
since $\lim_{z\rightarrow-\infty}t_{1}=0$. So 
\[
\limsup_{z\rightarrow-\infty}|t_{1}^{4}z^{2}|=\limsup_{z\rightarrow-\infty}|(t_{1}^{3}z)(t_{1}z)|=\infty.
\]
We factor $t_{1}^{4}z^{2}$ out of $D(t_{1},z)$ and utilize $\limsup_{z\rightarrow-\infty}|t_{1}z|=\limsup_{z\rightarrow-\infty}|t_{1}^{2}z|=\infty$
to get the following 
\[
\limsup_{z\rightarrow-\infty}|D(t_{1},z)|=\limsup_{z\rightarrow-\infty}|t_{1}^{4}z^{2}|\cdot\left|1-\frac{2}{t_{1}z}-\frac{36}{t_{1}z^{2}}+\frac{49}{t_{1}^{2}z^{2}}-\frac{2}{t_{1}^{2}z}-\frac{14}{t_{1}^{2}z(t_{1}z)}+\frac{1}{t_{1}^{4}z^{2}}\right|=\limsup_{z\rightarrow-\infty}|t_{1}^{4}z^{2}|=\infty.
\]
This contradicts to $D(t_{1},z)=0$ as $t_{1}$ is a zero of $D(t,z)$. 

The equation $\lim_{z\rightarrow-\infty}|t_{1}^{3}z|=0$ allows us
to rewrite $\lim_{z\rightarrow-\infty}D(t_{1},z)$ as the following
\[
0=\lim_{z\to-\infty}D(t_{1},z)=\lim_{z\to-\infty}t_{1}^{4}z^{2}-2zt_{1}^{2}+1.
\]
If $u=\limsup_{z\rightarrow-\infty}t_{1}^{2}z$, then 
\[
0=u^{2}-2u+1=(u-1)^{2}
\]
from which we deduce that $u=1$. Similarly $\liminf_{z\rightarrow-\infty}t_{1}^{2}z=1$.
Thus $\lim_{z\rightarrow-\infty}t_{1}^{2}z=1$ from which we have
\[
\Arg\left(\lim_{z\rightarrow-\infty}t_{1}^{2}z\right)=0.
\]
Since $z$ is a negative real number, the equation above gives $\lim_{z\rightarrow-\infty}\Arg(t_{1})=\pi/2$
(recall that $\Arg(t_{1})>0$ as $t_{1}$ lies in the upper half plane).
We conclude the proof of this lemma. 
\end{proof}
\begin{rem}
From the proof of Lemma \ref{lem:decreasing}, we have 
\[
\lim_{z\rightarrow-\infty}t_{1}^{2}z=1
\]
from which and the fact that $t_{1}$ lies on the upper-half plane,
we deduce that as $z\rightarrow-\infty$
\[
t_{1}\sim\frac{i}{\sqrt{-z}}.
\]
We will obtain a more precise asymptotic approximation of $t_{1}$,
which will be useful later in the proof of Lemma \ref{lem:numzeros}.
We let 
\[
t_{1}=\frac{i}{\sqrt{-z}}+\epsilon
\]
where 
\[
\epsilon=o\left(\frac{1}{\sqrt{-z}}\right)
\]
and substitute this equation to $D(t_{1},z)=0$ to conclude
\begin{align*}
0 & =\epsilon^{4}z^{2}-4i\epsilon^{3}\sqrt{-z}z-2\epsilon^{3}z-36\epsilon^{3}+4\epsilon^{2}z+6i\epsilon^{2}\sqrt{-z}+\frac{108i\epsilon^{2}\sqrt{-z}}{z}\\
 & +49\epsilon^{2}-\frac{98i\epsilon\sqrt{-z}}{z}-\frac{108\epsilon}{z}-20\epsilon+\frac{36i\sqrt{-z}}{z^{2}}+\frac{16i\sqrt{-z}}{z}+\frac{49}{z}.
\end{align*}
We apply $\epsilon=o(1/\sqrt{-z})$ and reduce this identity to 
\[
0=4\epsilon^{2}z+\frac{16i\sqrt{-z}}{z}+o\left(\epsilon^{2}|z|+\frac{1}{\sqrt{|z|}}\right)
\]
from which we conclude 
\[
\epsilon=\pm\frac{2e^{3i\pi/4}}{(-z)^{3/4}}+o\left(\frac{1}{|z|^{3/4}}\right).
\]
Consequently 
\[
t_{1}=\frac{i}{\sqrt{-z}}\pm\frac{2e^{3i\pi/4}}{(-z)^{3/4}}+o\left(\frac{1}{|z|^{3/4}}\right).
\]
Since $t_{1}$ lies in the first quadrant, we conclude that
\begin{equation}
t_{1}=\frac{i}{\sqrt{-z}}-\frac{2e^{3i\pi/4}}{(-z)^{3/4}}+o\left(\frac{1}{|z|^{3/4}}\right).\label{eq:t1approx}
\end{equation}
 
\end{rem}

Recall that we want to find the number of zeros of $H_{m}(z)$ on
the interval $(-\infty,-9/4)$. To achieve this goal, we will provide
a closed formula for the polynomial $H_{m}(z)$. The following lemma
provides such a formula in terms of $t_{1}(z)$, $t_{2}(z)$, $t_{3}(z)$,
and $t_{4}(z)$. For the ease of notations, we suppress the parameter
$z$ in these variables. 
\begin{lem}
\label{lem:partialfrac}For any $z\in(-\infty,-9/4)$, if $t_{1}$,
$t_{2}$, $t_{3}$ and $t_{4}$are the zeros of (\ref{eq:denominator}),
then 
\begin{equation}
-z^{2}P_{m}(z)=\frac{A}{t_{1}^{m+1}}+\frac{B}{t_{2}^{m+1}}+\frac{C}{t_{3}^{m+1}}+\frac{D}{t_{4}^{m+1}}\label{eq:Hmformula}
\end{equation}
where 
\begin{align*}
A & =\frac{1}{(t_{1}-t_{2})(t_{1}-t_{3})(t_{1}-t_{4})},\\
B & =\frac{1}{(t_{2}-t_{1})(t_{2}-t_{3})(t_{2}-t_{4})},\\
C & =\frac{1}{(t_{3}-t_{1})(t_{3}-t_{2})(t_{3}-t_{4})},\\
D & =\frac{1}{(t_{4}-t_{1})(t_{4}-t_{2})(t_{4}-t_{3})}.
\end{align*}
\end{lem}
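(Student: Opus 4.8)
The plan is to read off $P_{m}(z)$ directly from a partial fraction decomposition of the generating function, followed by a term-by-term geometric expansion about $t=0$. The starting point is that $D(t,z)$ is a degree-four polynomial in $t$ whose leading coefficient is $z^{2}$, and whose four roots $t_{1},t_{2},t_{3},t_{4}$ are distinct by Lemma \ref{lem:distinct}. Hence I would factor
\[
D(t,z)=z^{2}(t-t_{1})(t-t_{2})(t-t_{3})(t-t_{4}),
\]
so that the generating function becomes $1/D(t,z)=z^{-2}\bigl(\prod_{j}(t-t_{j})\bigr)^{-1}$, a rational function with four simple poles.

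Next I would compute the partial fraction decomposition. Because the poles are simple, the coefficient attached to $1/(t-t_{1})$ is the residue $\prod_{k\ne 1}(t_{1}-t_{k})^{-1}$, which is exactly the quantity $A$ in the statement; similarly the coefficients at $t_{2},t_{3},t_{4}$ are $B,C,D$. This gives
\[
\frac{1}{D(t,z)}=\frac{1}{z^{2}}\left(\frac{A}{t-t_{1}}+\frac{B}{t-t_{2}}+\frac{C}{t-t_{3}}+\frac{D}{t-t_{4}}\right).
\]
I would then expand each summand about $t=0$ using $\dfrac{1}{t-t_{j}}=-\sum_{m\ge 0}\dfrac{t^{m}}{t_{j}^{m+1}}$, valid for $|t|<|t_{j}|$. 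Since the roots are nonreal (hence nonzero) by Lemma \ref{lem:distinct}, all four series converge simultaneously on the disk $|t|<\min_{j}|t_{j}|$, which has positive radius. Collecting the coefficient of $t^{m}$ yields
\[
P_{m}(z)=-\frac{1}{z^{2}}\left(\frac{A}{t_{1}^{m+1}}+\frac{B}{t_{2}^{m+1}}+\frac{C}{t_{3}^{m+1}}+\frac{D}{t_{4}^{m+1}}\right),
\]
which is precisely the claimed identity after multiplying through by $-z^{2}$; the prefactor $-z^{2}$ is accounted for by the leading coefficient $z^{2}$ of $D$ together with the sign in the geometric expansion.

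I do not expect a deep obstacle here: the argument is essentially a bookkeeping computation. The one point that deserves care is the justification of matching coefficients, which rests on the uniqueness of the power series expansion of $1/D(t,z)$ about $t=0$ — both the series $\sum_{m}P_{m}(z)t^{m}$ defining the $P_{m}$ and the expanded partial-fraction series converge on a common disk about the origin, so their coefficients must agree term by term. A secondary point is to confirm that the stated coefficients $A,B,C,D$ really are the residues, i.e.\ that the factor $z^{-2}$ from the leading coefficient is correctly separated out so that the residues of $\bigl(\prod_{j}(t-t_{j})\bigr)^{-1}$ carry no extra $z$-dependence; this is immediate from the factorization above.
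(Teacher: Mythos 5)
Your proposal is correct and follows essentially the same route as the paper: factor $D(t,z)=z^{2}\prod_{j}(t-t_{j})$ using the distinctness of the roots from Lemma \ref{lem:distinct}, perform the partial fraction decomposition with the residues $A,B,C,D$, expand each $1/(t-t_{j})$ geometrically about $t=0$, and match coefficients. Your added remarks on the common disk of convergence and the uniqueness of the power series expansion are sound and only make explicit what the paper leaves implicit.
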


\begin{proof}
Since $t_{1},t_{2},t_{3},$ and $t_{4}$ are the zeros of the denominator
on the right side of (\ref{eq:Pmgen}), we can factor this denominator
and write our generating function as 
\[
\sum_{m=0}^{\infty}P_{m}(z)t^{m}=\frac{1}{z^{2}(t-t_{1})(t-t_{2})(t-t_{3})(t-t_{4})}.
\]
As $t_{1}$, $t_{2}$, $t_{3}$, and $t_{4}$ are distinct by Lemma
\ref{lem:distinct}, partial fraction decomposition yields 
\[
\sum_{m=0}^{\infty}P_{m}(z)=\frac{1}{z^{2}}\left(\frac{A}{t-t_{1}}+\frac{B}{t-t_{2}}+\frac{C}{t-t_{3}}+\frac{D}{t-t_{4}}\right)
\]
where $A$, $B$, $C$, and $D$ are given in the statement of the
lemma. We express each term on the right side as a power series in
$t$ as follow: 
\[
\frac{A}{t-t_{1}}=\frac{A}{t_{1}(1-\frac{t}{t_{1}})}=\sum_{m=0}^{\infty}-\frac{At^{m}}{t_{1}^{m+1}}.
\]
From similar computations for the remainder three terms, we conclude
\[
-z^{2}P_{m}(z)=\frac{A}{t_{1}^{m+1}}+\frac{B}{t_{2}^{m+1}}+\frac{C}{t_{3}^{m+1}}+\frac{D}{t_{4}^{m+1}},
\]
from which our lemma follows. 
\end{proof}
We note that the first two terms on the right side of \eqref{eq:Hmformula}
are complex conjugates and the same statement holds for the last two
terms of this expression. To count the number of real zeros of $P_{m}(z)$,
we will find the dominant term on the right side of \eqref{eq:Hmformula},
which is provided by the lemma below. 
\begin{lem}
\label{lem:compare} For any $m\in\mathbb{N}$ and any $z\in(-\infty,-9/4)$,
let $A,B,C,D$ be defined as in Lemma \ref{lem:partialfrac}. Then
\[
\left|\frac{A}{t_{1}^{m+1}}\right|>\left|\frac{C}{t_{3}^{m+1}}\right|.
\]
\end{lem}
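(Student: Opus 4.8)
The plan is to compute the ratio of the two sides exactly and thereby reduce everything to a single comparison of imaginary parts. Using the polar forms $t_{1}=re^{i\theta}$, $t_{2}=re^{-i\theta}$, $t_{3}=\varrho e^{i\phi}$, $t_{4}=\varrho e^{-i\phi}$ from the paragraph preceding Lemma \ref{lem:zerostrictineq}, I would first record
\[
|t_{1}-t_{2}|=2r\sin\theta=2\Im t_{1},\qquad|t_{3}-t_{4}|=2\varrho\sin\phi=2\Im t_{3},
\]
together with the coincidence
\[
|t_{1}-t_{4}|^{2}=r^{2}+\varrho^{2}-2r\varrho\cos(\theta+\phi)=|t_{3}-t_{2}|^{2}.
\]
Dividing the products that define $A$ and $C$, the factor $|t_{1}-t_{3}|$ cancels against $|t_{3}-t_{1}|$ and the equal factors $|t_{1}-t_{4}|=|t_{3}-t_{2}|$ cancel as well, so that $|A|/|C|=|t_{3}-t_{4}|/|t_{1}-t_{2}|=\Im t_{3}/\Im t_{1}$. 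Hence
\[
\frac{|A/t_{1}^{m+1}|}{|C/t_{3}^{m+1}|}=\frac{|A|}{|C|}\left(\frac{\varrho}{r}\right)^{m+1}=\frac{\Im t_{3}}{\Im t_{1}}\left(\frac{\varrho}{r}\right)^{m+1}.
\]

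By Lemma \ref{lem:zerostrictineq} we have $\varrho/r>1$, so $(\varrho/r)^{m+1}>1$ for every $m$. Thus the lemma reduces to establishing the single inequality $\Im t_{3}\ge\Im t_{1}$ on $(-\infty,-9/4)$. I emphasize that only the \emph{non-strict} inequality is needed: the strict factor $(\varrho/r)^{m+1}>1$ already forces a strict conclusion even at any point where $\Im t_{3}=\Im t_{1}$.

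To handle $\Im t_{3}\ge\Im t_{1}$, I would set $\Delta(z):=\Im t_{3}(z)-\Im t_{1}(z)$, which is continuous on $(-\infty,-9/4)$ since $t_{1},t_{3}$ are simple zeros of $D(t,z)$ (Lemma \ref{lem:distinct}) and so vary continuously with $z$. The key step is to locate the zeros of $\Delta$. Suppose $\Im t_{3}=\Im t_{1}=:Y$ at some $z$, and write $t_{1}=X+iY$, $t_{3}=U+iY$. Factoring $D(t,z)/z^{2}=(t^{2}-2Xt+X^{2}+Y^{2})(t^{2}-2Ut+U^{2}+Y^{2})$ and matching the $t^{3},t^{2},t^{1}$ coefficients against \eqref{eq:denominator} yields $X+U=(z+18)/z^{2}=:S$ and two expressions for the same quantity,
\[
XU+Y^{2}=\tfrac12\left(\frac{49-2z}{z^{2}}-S^{2}\right),\qquad XU+Y^{2}=\frac{7}{z^{2}S}.
\]
Equating these eliminates $X,U,Y$ entirely and forces $z$ to satisfy $(49-2z)z^{2}(z+18)-(z+18)^{3}=14z^{4}$, that is $(4z+9)(z-6)(z^{2}+3z-27)=0$. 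The only root of this equation in the open interval $(-\infty,-9/4)$ is $z_{0}=(-3-\sqrt{117})/2\approx-6.908$.

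Consequently $\Delta$ is continuous and nonvanishing on each of $(-\infty,z_{0})$ and $(z_{0},-9/4)$, hence of constant sign on each piece; evaluating $\Delta$ numerically at $z=-10$ and at $z=-3$ (in the same spirit as the computation of $\theta(-10)$ and $\theta(-3)$ already in the paper) shows $\Delta>0$ at both sample points, so $\Delta>0$ on both subintervals and $\Delta\ge0$ throughout, with equality at most at $z_{0}$. This gives $\Im t_{3}\ge\Im t_{1}$ and finishes the proof. I expect the elimination producing the quartic in $z$ to be the main obstacle: it is a routine but heavy resultant-type manipulation best left to a computer algebra system, and the one conceptual point is to recognize that the surviving interior root $z_{0}$ is harmless, precisely because the strict factor $(\varrho/r)^{m+1}$ already decides the inequality at any point where the imaginary parts coincide.
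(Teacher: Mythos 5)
Your proof is correct, and its first half coincides with the paper's: both compute $|C|/|A|=|t_{1}-t_{2}|/|t_{3}-t_{4}|=\Im t_{1}/\Im t_{3}$ via the cancellation $|t_{1}-t_{4}|=|t_{3}-t_{2}|$ and reduce the lemma to a comparison of imaginary parts using $\varrho>r$ from Lemma \ref{lem:zerostrictineq}. Where you genuinely diverge is in proving that comparison. The paper passes from $\Im t_{3}>\Im t_{1}$ to the angular statement $\sin\phi>\sin\theta$ (again using $\varrho>r$), and then rules out $\sin\phi=\sin\theta$ by noting this forces $\phi=\theta$ or $\phi=\pi-\theta$, each of which, fed into the Vieta identities \eqref{eq:firstsym} and \eqref{eq:thirdfourthsym}, pins $z$ to $-9/4$ or $3$, both outside the interval; one numerical sample then fixes the sign everywhere. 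You instead work directly with $\Delta=\Im t_{3}-\Im t_{1}$ and locate its possible zeros by matching coefficients of the factorization $D(t,z)/z^{2}=(t^{2}-2Xt+X^{2}+Y^{2})(t^{2}-2Ut+U^{2}+Y^{2})$; your elimination and the factorization $(4z+9)(z-6)(z^{2}+3z-27)=0$ check out, and the surviving root $z_{0}=(-3-\sqrt{117})/2$ inside the interval is indeed rendered harmless by the strict factor $(\varrho/r)^{m+1}>1$, since only $\Im t_{3}\ge\Im t_{1}$ is needed. The trade-off: the paper's angular reformulation turns the zero locus of the relevant difference into two linear conditions on $z$ with no roots in the interval, so a single sample point suffices and no slack from the modulus ratio is needed; your route requires a heavier resultant-type computation, two sample points (one on each side of $z_{0}$), and the extra observation about non-strictness, but it avoids the intermediate step of converting the imaginary-part inequality into an angular one. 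Two small points worth making explicit if you write this up: the case $S=(z+18)/z^{2}=0$ is automatically excluded because the $t^{1}$-coefficient equation would read $0=7/z^{2}$, and the constant-term matching equation is never used, which is fine since you only need a necessary condition on the zeros of $\Delta$.
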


\begin{proof}
It is equivalent to show 
\[
\left|\frac{t_{3}}{t_{1}}\right|^{m+1}>\left|\frac{C}{A}\right|.
\]
From the definition of $A$ and $C$ in \eqref{eq:Hmformula}, the
right side is 
\[
\left|\frac{C}{A}\right|=\left|\frac{(t_{1}-t_{2})(t_{1}-t_{4})}{(t_{3}-t_{2})(t_{3}-t_{4})}\right|.
\]
Since $\overline{t_{1}}=t_{2}$ and $\overline{t_{3}}=t_{4}$, we
have $|t_{1}-t_{4}|=|t_{3}-t_{2}|$ and consequently the right hand
side becomes 
\[
\left|\frac{t_{1}-t_{2}}{t_{3}-t_{4}}\right|=\left|\frac{\Im(t_{1})}{\Im(t_{3})}\right|.
\]
Thus it remains to prove that 
\[
\left|\frac{t_{3}}{t_{1}}\right|^{m+1}>\left|\frac{\Im(t_{1})}{\Im(t_{3})}\right|.
\]
Since $|t_{3}|>|t_{1}|$ by Lemma \ref{lem:zerostrictineq}, it suffices
to show 
\[
\Im(t_{3})>\Im(t_{1}).
\]
From the fact that $|t_{3}|>|t_{1}|$ and $t_{1}$ and $t_{3}$ lie
on the upper-half plane, it remains to show $\sin(\Arg(t_{3}))>\sin(\Arg(t_{1}))$.
We recall that $\theta=\Arg(t_{1})$ and $\phi=\Arg(t_{3})$. By the
continuity of $\theta$ and $\phi$ as functions of $z$, it suffices
to show $\sin\phi\ne\sin\theta$ for all $z\in(-\infty,-9/4)$ and
verify this inequality at one value of $z$. It is easy to check that
with a computer algebra system that at $z=-5$,, $\theta=0.37..$
and $\phi=1.299..$.

To finish the proof of this lemma we will prove $\sin(\phi)\ne\sin(\theta)$.
Assuming by contradiction that $\sin(\phi)=\sin(\theta)$, which is
equivalent to either $\phi=\theta$ or $\phi=\pi-\theta$.In the first
case when $\phi=\theta$, Equations \eqref{eq:firstsym} and \eqref{eq:thirdfourthsym}
give 
\[
-\frac{14}{z}=\frac{2z+36}{z^{2}}
\]
which implies $z=-\frac{9}{4}$. This contradicts to $z\in(-\infty,-\frac{9}{4})$.

Similarly, in the the case $\phi=\pi-\theta$ or equivalently $\cos\theta=-\cos\phi$,
we have 
\[
\frac{14}{z}=\frac{2z+36}{z^{2}},
\]
which implies $z=3$, a contradiction to the fact that $z\in(-\infty,-\frac{9}{4})$. 
\end{proof}
Recall that we want to find the number of zeros of $P_{m}(z)$ on
the interval $(-\infty,-9/4)$ and compare that number with the degree
of this polynomial which is at most $\lfloor m/2\rfloor$ by Lemma
\ref{lem:degree}. The lemma below gives a lower bound of the number
of real zeros of $P_{m}(z)$ on the given interval. Theorem \ref{thm:maintheorem}
follows from this lemma and the Fundamental Theorem of Algebra.

\begin{figure}
\begin{centering}
\includegraphics[scale=0.5]{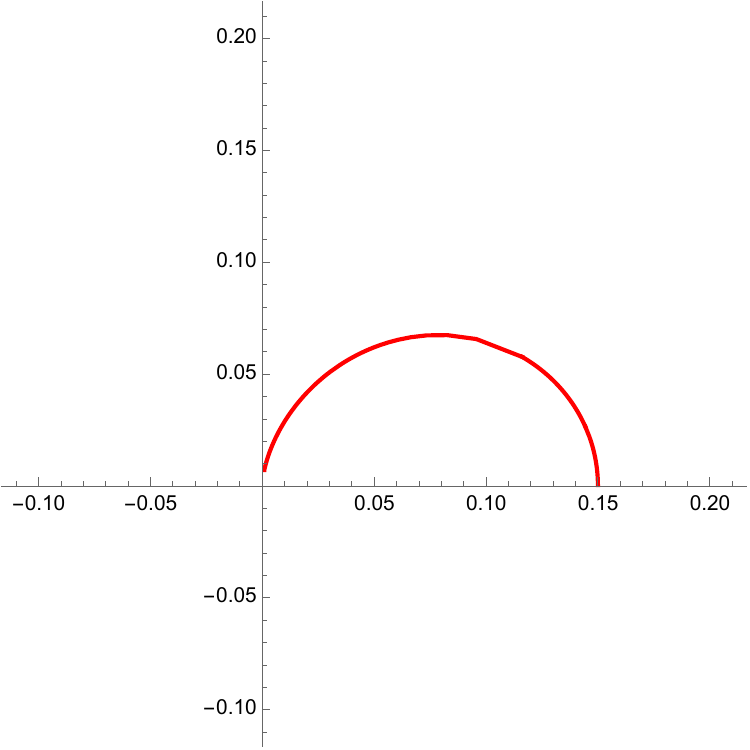} 
\par\end{centering}
\caption{\label{fig:t1z}The curve $t_{1}(z),-\infty<z<-9/4$}
\end{figure}

\begin{lem}
\label{lem:numzeros}$P_{m}(z)$ has at least $\lfloor\frac{m}{2}\rfloor$
zeros on the interval $(-\infty,-9/4)$. 
\end{lem}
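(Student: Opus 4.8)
The plan is to read off the sign of $P_m(z)$ on the interval directly from the closed form in Lemma~\ref{lem:partialfrac} and to manufacture $\lfloor m/2\rfloor$ sign changes by tracking a single phase. Since $z\in(-\infty,-9/4)$ is real and $D(t,z)$ is real, the roots pair up as $t_2=\overline{t_1}$, $t_4=\overline{t_3}$, and the coefficients accordingly satisfy $B=\overline{A}$, $D=\overline{C}$. Writing $t_1=re^{i\theta}$, $t_3=\varrho e^{i\phi}$, $A=|A|e^{i\alpha}$, $C=|C|e^{i\gamma}$, the expression in \eqref{eq:Hmformula} collapses to the real form
\[
-z^{2}P_{m}(z)=\frac{2|A|}{r^{m+1}}\cos\bigl((m+1)\theta-\alpha\bigr)+\frac{2|C|}{\varrho^{m+1}}\cos\bigl((m+1)\phi-\gamma\bigr).
\]
By Lemma~\ref{lem:compare} the first term has the strictly larger amplitude, $2|A|/r^{m+1}>2|C|/\varrho^{m+1}$, so it is dominant.

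First I would set $\psi(z):=(m+1)\theta(z)-\alpha(z)$, which is continuous on $(-\infty,-9/4)$ (continuity of $\theta$ is part of Lemma~\ref{lem:decreasing}, and $\alpha=\Arg A$ is continuous because the four roots are distinct and simple by Lemma~\ref{lem:distinct}). At any point $z_k$ with $\psi(z_k)=k\pi$ the dominant cosine equals $(-1)^k$, so the dominant term is $(-1)^k\,2|A|/r^{m+1}$; since this strictly exceeds the amplitude of the subordinate term, $\operatorname{sign}\bigl(-z^2P_m(z_k)\bigr)=(-1)^k$, and hence $\operatorname{sign}\bigl(P_m(z_k)\bigr)=(-1)^{k+1}$ because $-z^2<0$. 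Thus whenever $\psi$ attains two consecutive integer multiples $k\pi$ and $(k+1)\pi$, the intermediate value theorem produces a zero of $P_m$ between the two points.

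The count then reduces to measuring how many multiples of $\pi$ the phase $\psi$ sweeps across. By Lemma~\ref{lem:decreasing}, $\theta(z)$ runs monotonically over $(0,\pi/2)$ as $z$ ranges over $(-\infty,-9/4)$, so $(m+1)\theta$ sweeps an interval of length $(m+1)\pi/2$, which alone accounts for the required number of crossings; it remains to verify that the bounded correction $\alpha(z)$ does not spoil the count. I would pin down the two endpoint limits of $\psi$: as $z\to-9/4^-$ the factorization $\lim_{z\to-9/4}D(t,z)=\tfrac1{16}(9t^2-28t+4)^2$ shows the roots coalesce into two positive reals, fixing $\theta\to0$ and a definite limit of $\alpha$; as $z\to-\infty$ the refined asymptotic $t_1=\tfrac{i}{\sqrt{-z}}-\tfrac{2e^{3i\pi/4}}{(-z)^{3/4}}+o(|z|^{-3/4})$ from the Remark fixes both $\theta\to\pi/2$ and the limit of $\alpha$. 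Comparing these two limits shows $\psi$ changes by essentially $(m+1)\pi/2$, so it crosses at least $\lfloor m/2\rfloor+1$ consecutive integer multiples of $\pi$, and the sign alternation above yields at least $\lfloor m/2\rfloor$ distinct zeros on the interval.

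I expect the delicate point to be precisely this endpoint bookkeeping. The subtlety is that the dominance margin degenerates as $z\to-\infty$: there one finds $r,\varrho\sim|z|^{-1/2}$ simultaneously and $\theta,\phi\to\pi/2$, so the strict inequality of Lemma~\ref{lem:compare}, valid at every finite $z$, becomes tight in the limit, and the target count $\lfloor m/2\rfloor$ is sharp since it must match the degree bound from Lemma~\ref{lem:degree}. Getting the additive $O(1)$ contribution of $\alpha$ exactly right — rather than losing a single crossing and proving only $\lfloor m/2\rfloor-1$ — is where the precise expansion \eqref{eq:t1approx} is indispensable; once the count is secured, Theorem~\ref{thm:maintheorem} follows by combining this lemma with Lemma~\ref{lem:degree} and the Fundamental Theorem of Algebra.
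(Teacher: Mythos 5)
Your proposal follows the paper's own proof essentially step for step: you use the decomposition of Lemma \ref{lem:partialfrac}, the dominance of the $t_{1}$-term from Lemma \ref{lem:compare}, and you count sign changes of $-z^{2}P_{m}(z)$ by tracking the phase $\psi(z)=(m+1)\theta(z)-\alpha(z)$ of the dominant term across multiples of $\pi$, with the endpoint data supplied by Lemma \ref{lem:decreasing} and the asymptotic \eqref{eq:t1approx}. This is exactly the paper's computation of $\Delta\arg f(t_{1}(z))=\Delta\arg A-(m+1)\Delta\arg t_{1}(z)=m\pi/2+3\pi/4$, followed by the Intermediate Value Theorem.

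The one step you have not actually supplied is the global control of $\alpha(z)=\Arg A$ between the two endpoints. Your IVT count needs a \emph{continuous} branch of $\alpha$, and the two limits you propose to compute (via the coalescence of roots at $z=-9/4$ and via \eqref{eq:t1approx} at $-\infty$) are only principal values; the net change of the continuous branch is determined by these limits only up to an integer multiple of $2\pi$, that is, up to the winding of the curve $A(z)$ about the origin. If $A(z)$ wound around once, your count of multiples of $\pi$ swept by $\psi$ would change by two. The paper closes this by proving that $A\notin i\mathbb{R}^{+}$ for every $z\in(-\infty,-9/4)$ (a short computation writing $t_{1}=a+bi$, $t_{3}=c+di$ and using Lemma \ref{lem:zerostrictineq}), which traps the continuous branch of $\Arg A$ in an interval of length $2\pi$ and forces its net change to equal the difference of the endpoint limits, namely $\pi/4$. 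You correctly flag the ``additive $O(1)$ contribution of $\alpha$'' as the delicate point, but \eqref{eq:t1approx} alone only pins down the limit at $-\infty$; some statement ruling out winding of $A(z)$ over the whole interval is needed. With that added, your argument is complete and coincides with the paper's.
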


\begin{proof}
Recall from \eqref{eq:Hmformula} that for $z\in(-\infty,-9/4)$
\begin{equation}
-z^{2}P_{m}(z)=2\Re\left(\frac{A}{t_{1}^{m+1}}\right)+2\Re\left(\frac{C}{t_{3}^{m+1}}\right)\label{eq:P_mReform}
\end{equation}
where, by Lemma \ref{lem:compare}, 
\[
\left|\frac{A}{t_{1}^{m+1}}\right|>\left|\frac{C}{t_{3}^{m+1}}\right|.
\]
Let 
\begin{equation}
f(t_{1})=\frac{A}{t_{1}^{m+1}}.\label{eq:fdef}
\end{equation}
From the Implicit Function Theorem, the function $t_{1}(z)$, $z\in(-\infty,-9/4)$
produces a smooth curve in the complex plane (see Figure \ref{fig:t1z})
and hence so is $f(t_{1}(z))$. We deduce from (\ref{eq:P_mReform})
that at the value of $z$ where $f(t_{1}(z))\in\mathbb{R}^{+}$ or
$f(t_{1}(z))\in\mathbb{R}^{-}$ we have $-z^{2}P_{m}(z)>0$ or $-z^{2}P_{m}(z)<0$
respectively. By the Intermediate Value theorem, there is a zero of
$P_{m}(z)$ when there is a change in sign of $-z^{2}P_{m}(z)$. Thus,
to count the number of zeros of $P_{m}(z)$ on $(-\infty,-9/4)$,
we can count number of times the curve $f(t_{1}(z))$, $z\in(-\infty,-9/4)$,
intersects the real axis. To count this number, we compute the change
in argument of this curve. From (\ref{eq:fdef}) we have 
\begin{equation}
\Delta\arg_{-\infty<z<-9/4}f(t_{1}(z))=\Delta\arg_{-\infty<z<-9/4}A-(m+1)\Delta\arg_{-\infty<z<-9/4}t_{1}(z).\label{eq:changeargft_1}
\end{equation}
We deduce from Lemma \ref{lem:decreasing} that 
\[
\Delta\arg_{-\infty<z<-9/4}t_{1}(z)=-\pi/2.
\]
To measure the change in argument of $A$, we claim that $A\notin i\mathbb{R}^{+}$
for all $z\in(-\infty,-9/4)$ from which we can conclude that the
change in argument is 
\begin{equation}
\lim_{z\rightarrow-9/4}\Arg A-\lim_{z\rightarrow-\infty}\Arg A.\label{eq:changeargA}
\end{equation}
Indeed, we write 
\begin{align*}
t_{1} & =a+bi,\\
t_{2} & =a-bi,\\
t_{3} & =c+di,\\
t_{4} & =c-di,
\end{align*}
and obtain from procedural computations that 
\[
A=\frac{1}{(t_{1}-t_{2})(t_{1}-t_{3})(t_{1}-t_{4})}=\frac{1}{(2bi)(a^{2}-b^{2}+c^{2}+d^{2}-2ac+2abi-2bci)}.
\]
If by contradiction $A\in i\mathbb{R}^{+}$, then the fact that $b>0$
(as $t_{1}$ lies in the first quadrant) gives 
\[
a^{2}-b^{2}+c^{2}+d^{2}-2ac+2abi-2bci\in\mathbb{R^{-}}.
\]
which implies $b(a-c)=0$. In the first case when $b=0$, the real
part of the expression above is $(a-c)^{2}+d^{2}>0$. In the second
case when $a-c=0$, this real part is $d^{2}-b^{2}>0$ by Lemma \ref{lem:zerostrictineq}.
Thus we obtain a contradiction in both cases.

We now compute (\ref{eq:changeargA}). To compute the first term in
this expression, we note from 
\[
D(t,-9/4)=\frac{1}{16}\left(9t^{2}-28t+4\right)^{2},
\]
the equation $D(t_{1}(z),z)=0$, and Lemma \ref{lem:zerostrictineq},
that 
\[
\lim_{z\rightarrow-9/4}t_{1}(z)=\lim_{z\rightarrow-9/4}t_{2}(z)=\frac{2}{9}\left(7-2\sqrt{10}\right)
\]
and 
\[
\lim_{z\rightarrow-9/4}t_{3}(z)=\lim_{z\rightarrow-9/4}t_{4}(z)=\frac{2}{9}\left(7+2\sqrt{10}\right).
\]
We recall from Lemma \ref{lem:compare} that 
\[
A=\frac{1}{(t_{1}-t_{2})(t_{1}-t_{3})(t_{1}-t_{4})}.
\]
As $z\rightarrow-9/4$ we have 
\[
(t_{1}-t_{3})(t_{1}-t_{4})\rightarrow\mathbb{R}^{+}
\]
while 
\[
\Arg(t_{1}-t_{2})=\pi/2
\]
since $t_{2}=\overline{t_{1}}$ and $t_{1}$ lies in the upper half
plane. These equations imply that 
\begin{equation}
\lim_{z\rightarrow-9/4}\Arg A=-\frac{\pi}{2}.\label{eq:angAright}
\end{equation}

We will next compute the second term of (\ref{eq:changeargA}). We
note that the denominator of $A$ is $D_{t}(t_{1},z)/z^{2}$ since
\[
D(t,z)=z^{2}(t-t_{1})(t-t_{2})(t-t_{3})(t-t_{4}).
\]
Thus 
\[
\lim_{z\rightarrow-\infty}\Arg(A)=-\lim_{z\rightarrow-\infty}\Arg D_{t}(t_{1},z)
\]
where from (\ref{eq:denominator})
\[
D_{t}(t_{1},z)=4t_{1}^{3}z^{2}-6t_{1}^{2}z-108t_{1}^{2}-4t_{1}z+98t_{1}-14.
\]
We substitute (\ref{eq:t1approx}) to the right side of this equation
to obtain the following asymptotics as $z\rightarrow\infty$
\[
D_{t}(t_{1},z)\sim16(-z)^{1/4}e^{3i\pi/4}
\]
and consequently 
\[
\lim_{z\rightarrow-\infty}\Arg(A)=-3\pi/4.
\]
We conclude from (\ref{eq:angAright}) and the fact that $A\notin i\mathbb{R}$
that 
\[
\Delta\arg_{-\infty<z<-9/4}A=\pi/4.
\]
Consequently, from (\ref{eq:changeargft_1}) 
\[
\Delta\arg_{-\infty<z<-9/4}f(t_{1}(z))=\frac{\pi}{4}+\frac{(m+1)\pi}{2}=\frac{m\pi}{2}+\frac{3\pi}{4}.
\]
We recall that that at the values of $z\in(-\infty,-9/4)$ where $f(t_{1}(z))\in\mathbb{R}^{+}$
or $f(t_{1}(z))\in\mathbb{R}^{-}$ we have $-z^{2}H_{m}(z)>0$ or
$-z^{2}H_{m}(z)<0$ respectively. By the Intermediate Value Theorem,
there is at least a zero of $H_{m}(z)$ between two consecutive values
of $z$ where $f(t_{1}(z))$ changes from $\mathbb{R}^{+}$ to $\mathbb{R}^{-}$
or vice versa. Since the change of argument of $f(t_{1}(z))$ is $m\pi/2+3\pi/4$
there are at least $\lfloor\frac{m}{2}\rfloor$ such changes. Thus
we obtain at least $\lfloor\frac{m}{2}\rfloor$ zeros of $P_{m}(z)$
on $(-\infty,-9/4)$ and the lemma follows. 
\end{proof}

\end{document}